\newtheorem{thm}{Theorem}[section]
\newtheorem{prop}[thm]{Proposition}
\newtheorem{lemma}[thm]{Lemma}
\numberwithin{equation}{section}
\newcommand{\pt}{\partial}
\newcommand{\bp}{\overline{\partial}}
\newcommand{\lmd}{\lambda}
\newcommand{\vps}{\varepsilon}
\newcommand{\vph}{\varphi}
\newcommand{\og}{\omega}
\newcommand{\pal}{\parallel}
\newcommand{\raw}{\rightarrow}
\newcommand{\lraw}{\longrightarrow}
\newcommand{\mf}{\mathcal{F}}
\newcommand{\tr}{\text{tr}}
\newcommand{\therm}{\text{Herm}}
\newcommand{\tend}{\text{End}}
\newcommand{\lag}{\langle}
\newcommand{\rag}{\rangle}
\newcommand{\mfd}{\mathcal{D}}
\newcommand{\mfk}{\mathcal{K}}
\newcommand{\ol}{\overline}
\begin{document}

\title{\textsc{Semistable Higgs bundles  over compact Gauduchon manifolds }}
\author{Yanci Nie and Xi Zhang}
\address{Yanci Nie\\School of Mathematical Sciences\\
University of Science and Technology of China\\
Hefei, 230026\\ } \email{nieyanci@mail.ustc.edu.cn}
\address{Xi Zhang\\Key Laboratory of Wu Wen-Tsun Mathematics\\ Chinese Academy of Sciences\\School of Mathematical Sciences\\
University of Science and Technology of China\\
Hefei, 230026,P.R. China\\ } \email{mathzx@ustc.edu.cn}
\subjclass[]{53C07, 58E15}
\keywords{Higgs bundle, approximate Hermitian-Einstein structure, Gauduchon manifolds. }
\thanks{The authors were supported in part by NSF in China,  No.11131007.}
\maketitle
\begin{abstract}
In this paper, we consider the existence of approximate Hermitian-Einstein structure and the semi-stability on Higgs bundles over compact Gauduchon manifolds. By using the continuity method, we show that they are equivalent.
\end{abstract}

\section{introduction}

Let $X$ be an $n$-dimensional compact complex manifold and $g$ be a Hermitian metric with associated  K\"ahler form $\og$. $g$ is called to be Gauduchon if $\og$ satisfies $\pt\bp \og^{n-1}=0.$  It has been proved by Gauduchon that if $X$ is compact, there exists a Gauduchon metric (\cite{Gaud}) in the conformal class of every Hermitian metric $g$. In the following, we assume $\og$ is  Gauduchon.

Let $(L,h)$ be a Hermitian line bundle over $X$.  The $\og$-degree of $L$ is defined by
\begin{equation*}
\deg_{\og}(L):=\int_X c_1(L, A_h)\wedge\displaystyle{\frac{\og^{n-1}}{(n-1)!}},
\end{equation*}
where $c_1(L,A_h)$ is the first Chern form of $L$ associated with the induced Chern connection  $A_h$.  Since $\pt\bp \og^{n-1}=0,$  $\deg_{\og}(L)$ is  well defined and independent of the choice of    metric $h$ (\cite[p.~34-35]{MA}). Now given a rank $s$ coherent analytic sheaf  $\mathcal{F}$, we consider the  determinant line bundle  $\det{\mathcal{F}}=(\wedge^s\mathcal{F} )^{**}.$  Define the $\og$-degree of $\mathcal{F}$ by
\begin{equation*}
\deg_{\og}(\mathcal{F}):=\mbox{deg}_{\og}(\det{\mathcal{F}}).
\end{equation*}
If $\mf$ is non-trivial and torsion free, the $\og$-slope of $\mathcal{F}$ is defined by
  $$\mu_{\og}(\mathcal{F})=\frac{\mbox{deg}_{\og}(\mathcal{F})}{\mbox{rank}(\mathcal{F})}.$$

 Let $(E, \bp_E)$ be a holomorphic vector bundle over $X$. We say $E$ is $\og$-stable ($\og$-semi-stable) in the sense of Mumford-Takemoto  if for every proper coherent sub-sheaf $ \mathcal{F}\hookrightarrow E$, there holds
\begin{equation*}
\mu_{\og}(\mathcal{F})<\mu_{\og}(E)(\mu_{\og}(\mathcal{F})\leq\mu_{\og}(E)).
\end{equation*}

A Hermitian metric $H$ on $E$ is said to be $\og$-Hermitian-Einstein if the Chern curvature $F_H$  satisfies the Einstein condition
 $$ \sqrt{-1}\Lambda_{\og}F_H=\lmd\cdot \mathrm{Id}_{E},$$
 where $\lmd=\displaystyle{\frac{2\pi \mu_{\og}(E)}{Vol(X)}}.$  When the K\"ahler form is understood, we omit the subscript  $\og$ in the above definitions.

\medskip
 The Donaldson-Uhlernbeck-Yau theorem states that holomorphic vector bundles admit  Hermitian-Einstein metrics if they are stable.  It was proved  by  Narasimhan and Seshadri  in \cite{NarSe} for compact Riemann surface case, by Donaldson in \cite{Don2,Don3} for algebraic manifolds and by Uhlenbeck and Yau in \cite{UhYau, UhYau1} for general compact K\"ahler manifolds. The inverse problem that a holomorphic bundle admitting such a metric must be poly-stable( i.e. a direct sum of stable bundles with the same slope) was solved by Kobayashi \cite{Kobayashi1} and L\"ubke \cite{Lubke} independently. Actually, this is the well-known  Hitchin-Kobayashi correspondence for holomorphic vector bundles over compact K\"ahler manifolds.  This correspondence is also valid for compact Gauduchon manifolds \cite{bu,LY,MA}. There are many other  interesting generalized Hitchin-Kobayashi correspondences (see the references \cite{alvarez,bando,biquard,bradlow,bragar,garcia,Hit,jost,ljy1,ljy2,simpson1988constructing} for details).

\medskip
 A Higgs vector bundle $(E, \bp_E, \phi)$ over $X$ is a holomorphic vector bundle $(E,\bp_{E})$ together with a Higgs field $\phi\in \Omega_X^{1,0}(\text{End}(E))$ satisfying $\bp_E \phi=0$ and $\phi\wedge \phi=0.$  Higgs bundle was introduced by Hitchin \cite{Hit} in his study of self dual equations on a Riemann surface, and studied by Simpson \cite{simpson1988constructing} in his work on nonabelian Hodge theory. It has a rich structure and plays an important role in many areas including gauge theory, K\"ahler geometry and hyperk\"ahler geometry, group representations and non-abelian Hodge theory.  A Higgs bundle $(E, \overline{\partial }_{E}, \phi)$ is stable(resp. semi-stable) if $\mu(\mathcal{F})<\mu(E)(\mbox{resp}.\  \mu(\mathcal{F})
\leq\mu(E))$ for every proper  $\phi$-invariant coherent subsheaf $\mathcal{F}$ of $E$.

 Given a Hermitian metric $H$ on a Higgs bundle, we consider the Hitchin-Simpson connection  (\cite{simpson1988constructing})
 \begin{eqnarray*}
 D_{H, \overline{\partial }_{E}, \phi }= D_{H, \overline{\partial }_{E}} + \phi +\phi ^{\ast H},
 \end{eqnarray*}
where $D_{H, \bp_{E}}$ is the Chern connection, and $\phi ^{\ast H}$ is the adjoint of $\phi $ with respect to the metric $H$. The curvature of this connection is
\begin{eqnarray*}
F_{H, \overline{\partial }_{E}, \phi}=F_{H} +[\phi , \phi ^{\ast H}] +\pt_H \phi + \overline{\partial }_{E} \phi^{\ast H},
\end{eqnarray*}
where $F_{H}$ is the curvature of the Chern connection $D_{H, \overline{\partial }_{E}}$. A Hermitian metric $H$ on Higgs bundle $(E, \overline{\partial }_{E}, \phi  )$ is said to be
 Hermitian-Einstein  if the curvature $F_{H, \overline{\partial }_{E}, \phi}$  satisfies
\begin{eqnarray*}
\sqrt{-1}\Lambda_{\og}F_{H, \bp_E, \phi}=\sqrt{-1}\Lambda_{\og} (F_{H} +[\phi , \phi ^{\ast H}])
=\lambda Id_{E}.
\end{eqnarray*}

 Hitchin \cite{Hit} and Simpson \cite{simpson1988constructing} proved that a Higgs bundle is poly-stable if and only if it admits a Hermitian-Einstein structure. This is a Higgs bundle  version of the classical Hitchin-Kobayashi correspondence.
\par

\medskip

 A Higgs bundle is said to be admitting an approximate Hermitian-Einstein structure, if for
$\forall \vps>0,$ there exists a Hermitian metric $H_{\vps}$ such that
 $$\max\limits_X \mid \sqrt{-1}\Lambda_{\og}( F_{H_{\vps}}+[\phi,\phi^{*H_{\vps}}])-\lmd\cdot \mathrm{Id}_E \mid_{H_{\vps}}<\vps. $$

Kobayashi(\cite{Kobayashi}) introduced this notion in a holomorphic vector bundle (i.e. $\phi=0$). He proved that over a compact K\"ahler manifold, a holomorphic vector bundle  admitting such a structure structure must be semi-stable. In \cite{bruzzo},  Bruzzo and  Gra\~na Otero  generalized the above result to Higgs bundles. When $X$ is projective,  Kobayashi \cite{Kobayashi} solved the inverse part that a semi-stable holomorphic vector bundle must admit an approximate Hermitian-Einstein structure  and conjectured that this should be true for general K\"ahler manifolds. This was confirmed in \cite{Cardona,Jacob1,li2012existence}.
\medskip

In this paper, we are interested in the existence of approximate Hermitian-Einstein structures on  Higgs bundles over compact Gauduchon  manifolds. In fact, we prove that:
\begin{thm}\label{B}
Let $(X,\og)$ be an $n$-dimensional  compact Gauduchon manifold and $(E, \bp_E, \phi)$ be a rank $r$  Higgs bundle over $X$. Then  $(E, \bp_E, \phi)$ is semi-stable if and only if it admits an approximate Hermitian-Einstein structure.
\end{thm}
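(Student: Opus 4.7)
My plan is to prove the two implications of Theorem~\ref{B} separately; the forward direction is short, while the converse requires a continuity method adapted to the Gauduchon-Higgs setting.

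For ``approximate Hermitian-Einstein $\Rightarrow$ semi-stable'', let $\mf\hookrightarrow E$ be a proper $\phi$-invariant coherent subsheaf and represent it, outside its singular set, by the $H_\vps$-orthogonal projection $\pi$ satisfying $\pi^2=\pi=\pi^{*H_\vps}$, $(\mathrm{Id}-\pi)\bp\pi=0$, and $(\mathrm{Id}-\pi)\phi\pi=0$. On a Gauduchon manifold one derives, from $\pt\bp\og^{n-1}=0$, the Chern-Weil identity
\begin{multline*}
\deg_\og(\mf) = \int_X \tr\!\big(\pi\cdot\sqrt{-1}\Lambda_\og(F_{H_\vps}+[\phi,\phi^{*H_\vps}])\big)\frac{\og^n}{n!} \\
- \int_X\!\big(|\bp\pi|^2_{H_\vps}+|\phi\pi-\pi\phi|^2_{H_\vps}\big)\frac{\og^n}{n!}.
\end{multline*}
Plugging in $|\sqrt{-1}\Lambda_\og(F_{H_\vps}+[\phi,\phi^{*H_\vps}])-\lmd\,\mathrm{Id}_E|_{H_\vps}<\vps$ and using $\tr\pi=\mathrm{rank}(\mf)$ yields $\mu_\og(\mf)\leq\mu_\og(E)+C\vps$, so sending $\vps\to 0$ gives semi-stability.

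For the converse, fix a background Hermitian metric $H_0$ and, for $t\in(0,1]$, consider the perturbed equation
\[
L_t(H):=\sqrt{-1}\Lambda_\og(F_H+[\phi,\phi^{*H}])-\lmd\,\mathrm{Id}_E + t\log(H H_0^{-1})=0.
\]
I first aim to solve $L_t(H_t)=0$ for every $t\in(0,1]$: solvability at $t=1$ follows from a strictly convex modified Donaldson-type functional (the $t\log$ term providing coercivity), openness of the solvable set from the implicit function theorem applied to the strictly elliptic linearization of $L_t$, and closedness on $(0,1]$ from uniform $C^0$ bounds on $s_t=\log(H_t H_0^{-1})$ for $t$ bounded away from zero (via the maximum principle on $|s_t|_{H_0}$ combined with the trace equation, followed by standard elliptic regularity). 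Since $L_t(H_t)=0$ gives $\|\sqrt{-1}\Lambda_\og(F_{H_t}+[\phi,\phi^{*H_t}])-\lmd\,\mathrm{Id}_E\|_{L^\infty}=t\,\eta(t)$ with $\eta(t):=\|s_t\|_{L^\infty}$, the task reduces to showing $t\,\eta(t)\to 0$ as $t\to 0^+$. If $\eta(t)$ stays bounded, a subsequential limit is a genuine Hermitian-Einstein metric and the conclusion follows a fortiori. Otherwise, along $t_k\to 0$ with $\eta_k\to\infty$, the normalizations $u_k:=s_{t_k}/\eta_k$ admit a nontrivial weak $L^2_1$-limit $u_\infty$; following Uhlenbeck-Yau and Simpson, the spectral projections of $u_\infty$ define weakly holomorphic sub-bundles of $E$, while the $[\phi,\phi^{*H_t}]$ term in $L_{t_k}=0$ forces $\phi$-invariance in the limit. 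Testing $L_{t_k}=0$ against a piecewise-smooth function of $u_k$, integrating against $\og^{n-1}$ so that $\pt\bp\og^{n-1}=0$ can be invoked, and passing to the limit should produce at least one eigenspace that strictly destabilizes $E$, contradicting semi-stability unless $t_k\eta_k\to 0$. The family $\{H_t\}$ then furnishes the desired approximate Hermitian-Einstein structure.

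The main obstacle will be combining the Gauduchon condition with the Higgs field in the subsheaf construction: in the K\"ahler case one integrates freely using $d\og=0$, whereas here every global identity (the Chern-Weil formula for the subsheaf, the $L^2$ identity obtained by testing $L_t=0$, the monotonicity-type inequalities) must be rearranged so that only $\pt\bp\og^{n-1}=0$ is used, which demands working consistently with $\og^{n-1}$ and checking cancellation of terms involving $\pt\og$ or $\bp\og$. Simultaneously, Simpson-style cut-off functions on $\mathrm{spec}(u_\infty)$ must be chosen so as to commute sufficiently with the $[\phi,\phi^{*H_t}]$ term to yield $\phi$-invariance of the limiting subsheaf, which is the technical heart of the argument.
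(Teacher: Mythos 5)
Your converse direction (semi-stable $\Rightarrow$ approximate Hermitian--Einstein) follows essentially the same route as the paper: the same perturbed equation $\mathcal{K}_H-\lmd\,\mathrm{Id}_E+\vps\log f=0$, the same dichotomy on $\|\log f_\vps\|_{L^\infty}$, and the same Simpson/Uhlenbeck--Yau blow-up analysis producing a destabilizing $\phi$-invariant subsheaf from the spectral projections of the normalized limit $u_\infty$; the paper's technical core here is the integral identity of Proposition \ref{prop1}, where the extra term $\int_X\mathrm{Tr}(\sqrt{-1}f^{-1}(\pt_{H_0}f)s)\,\bp\og^{n-1}$ is killed by rewriting it as $\tfrac12\int_X\pt\mathrm{Tr}(\sqrt{-1}s^2)\wedge\bp\og^{n-1}$ and invoking $\pt\bp\og^{n-1}=0$ --- exactly the cancellation you flag as the main obstacle. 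One caveat: you propose to get solvability of the perturbed equation at $t=1$ from a ``strictly convex modified Donaldson-type functional,'' but the paper explicitly points out that the Donaldson functional is not well-defined when $\og$ is merely Gauduchon (it is path-dependent); the solvability for all $\vps\in(0,1]$ is instead taken from L\"ubke--Teleman \cite{MA,MA1}, so you should either cite that or replace the variational argument by their openness/closedness scheme. Your forward direction is genuinely different from the paper's: you estimate $\deg_\og(\mf)$ directly via Bruasse's Chern--Weil formula for the $H_\vps$-orthogonal weakly holomorphic projection and let $\vps\to0$, whereas the paper proves a vanishing theorem (Proposition \ref{prop2}: an approximate Hermitian--Einstein Higgs bundle of negative degree has no nonzero $\phi$-invariant sections) and applies it to $\wedge^p E\otimes\det\mf^{-1}$ with its induced section. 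Your version is more direct and avoids verifying that the determinant construction inherits an approximate Hermitian--Einstein structure, at the cost of invoking the Uhlenbeck--Yau regularity of the $L^2_1$-projection and the metric-independence of $\deg_\og$ up front; both are legitimate on a Gauduchon manifold since the paper already relies on Bruasse's formula in its own Step 2.
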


Now we give an overview of our proof. The difficult part of Theorem \ref{B} is to prove the existence of approximate Hermitian-Einstein structure. In the K\"ahler case, by using the Donaldson heat flow, Li and Zhang (\cite{li2012existence}) showed that the semi-stability implies admitting an approximate Hermitian-Einstein structure. Their proof relies on the properties of the Donaldson functional. However, the Donaldson functional is not well-defined if $\og$ is only Gauduchon. So Li and Zhang's argument can not be generalized to Gauduchon manifold case directly. In this paper,  we use the continuity  method  to prove the existence. Fixed a proper background  Hermitian metric $H_0$ on $E$, we  consider the following perturbed equation
 \begin{equation}\label{eq11}
L_{\varepsilon}(f):=\mathcal{K}_H -\lmd\mbox{Id}_{E}+\vps\log{f}=0,\ \ \ \ \ \vps\in (0,1],
 \end{equation}
  where $\mfk_H=\sqrt{-1}\Lambda_{\og}F_{H, \bp_E, \phi}=K_H+\sqrt{-1}\Lambda_{\og}[\phi,\phi^{*H}]$ and $f=H_0^{-1}\cdot H.$ It is obvious that $f$ and $\log f$ are self adjoint with respect to $H_0$ and $H$.  By the results  of L\"ubke and Teleman in \cite{MA,MA1}, (\ref{eq11}) is solvable for $\forall \vps\in (0,1]$.  Under the assumption of semi-stability, we can show that
  \begin{equation}
  \lim \limits_{\vps\rightarrow 0}\vps\max\limits_X \mid \log f_{\vps}\mid_{H_0}=0.
  \end{equation}
  This implies that  $\max\limits_X\mid \mathcal{K}_{H_{\vps}}-\lmd\cdot \mathrm{Id}_E \mid_{H_{\vps}}$ converges to zero as $\vps\rightarrow 0$ (see Theorem \ref{A.5} for details).

\medskip

This article is organised as below. In Sect.\ref{sec2}, we present  some basic estimates for the perturbed equation (\ref{eq11}). In Sect.\ref{sec3}, we prove  Theorem \ref{B} in detail.

\section{Preliminary}\label{sec2}
Let $(E,\bp_E,\phi)$ be a Higgs bundle over $X$ and $H$ be a Hermitian metric on $E.$ Set
$$\therm(E,H)=\{\eta\in \tend(E)\mid \eta^{*H}=\eta\}$$
and $$\therm^+(E,H)=\{ \rho\in \therm(E,H)\mid H\rho \ \text{is positive definite}\}.$$
Suppose  $f\in \therm^+(E,H_0)$ is a solution of the equation (\ref{eq11}) with the background metric $H_0$ for some  $\vps\in (0,1]$. Substituting
\begin{equation*}
\mfk_H=\mfk_{H_0}+\sqrt{-1}\Lambda_{\omega}\left(\bp(f^{-1}\circ\pt_{H_0} f)+[\phi,\phi^{*H}-\phi^{*H_0}]\right)
\end{equation*}
into (\ref{eq11}), we obtain
\begin{eqnarray}\label{eq21}
L_{\vps}(f)=\mathcal{K}_{H_0}-\lmd \mbox{Id}_E+\sqrt{-1}\Lambda_{\omega}\left(\bp(f^{-1}\circ\pt_{H_0} f)+[\phi,\phi^{*H}-\phi^{*H_0}]\right)+\vps \log f=0.
\end{eqnarray}
 Furthermore, by an appropriate  conformal change,  we can assume that $H_0$ satisfies
$$\mbox{tr}(\mathcal{K}_{H_0}-\lmd\mbox{Id}_{E})=0.$$
In fact, let $H_0=e^{\vph}H_0',$ where $H_0'$ is an arbitrary metric and $\vph$ is a smooth function satisfying
\begin{equation}\label{eq22}
\sqrt{-1}\Lambda_{\og}\bp\partial(\vph)=-\frac{1}{r}\mathrm{tr}(\mathcal{K}_{H_0'}-\lmd\cdot\mathrm{Id}_E).
\end{equation}
 Since $\int_X\mbox{tr}(\mathcal{K}_{H_0'}-\lambda\mbox{Id}_E)\omega^n=0,$ equation (\ref{eq22}) is solvable.

 For simplicity, we  set $ \Phi(H, \phi)=\mathcal{K}_{H}-\lmd \cdot \mathrm{Id}_E$. It is easily to check that $\Phi(H, \phi)^{*H}=\Phi(H, \phi).$ The following two lemmas are proved by Teleman and L\"ubke in \cite{MA}. Here we present the proofs just for readers' convenience.
\begin{lemma}\label{lm1}
 Fix a background Hermitian metric $H_0$ satisfying $\tr\, \Phi(H_0, \phi)=0.$ Then for any $f\in \mbox{Herm}^+(E,H_0)$ such that $ L_{\vps}(f)=0,$ it holds
\begin{align*}
P(\mbox{tr}\log f)+\vps\mbox{tr}\log f=0,
\end{align*}
where $P$ is denoted by $P=\sqrt{-1}\Lambda_{\og}\bp\partial.$  Furthermore, we have $\det f=1.$
\end{lemma}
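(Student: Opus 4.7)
The plan is to derive the scalar equation by taking the trace of $L_{\vps}(f)=0$ and then obtain $\det f=1$ via a global integration argument using the Gauduchon condition.

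First I would take the pointwise trace of the equation $L_{\vps}(f)=\mfk_H-\lmd\,\mathrm{Id}_E+\vps\log f=0$, which gives
\[
\tr\mfk_H-\lmd r+\vps\,\tr\log f=0.
\]
Next, I would take the trace of the identity for $\mfk_H$ already recorded in the excerpt, namely
\[
\mfk_H=\mfk_{H_0}+\sqrt{-1}\Lambda_{\og}\bigl(\bp(f^{-1}\circ\pt_{H_0}f)+[\phi,\phi^{*H}-\phi^{*H_0}]\bigr).
\]
The commutator term disappears under $\tr$, and since $\tr$ commutes with $\bp$ it remains to compute $\tr(f^{-1}\pt_{H_0}f)$. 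Working in a local holomorphic frame where the Chern connection matrix is $H_0^{-1}\pt H_0$, the $[A,f]$ piece cancels under the trace by cyclicity, leaving $\tr(f^{-1}\pt f)=\pt\log\det f=\pt\,\tr\log f$. Hence
\[
\tr\mfk_H=\tr\mfk_{H_0}+P(\tr\log f).
\]
Substituting this into the traced equation and invoking the normalization $\tr\Phi(H_0,\phi)=\tr\mfk_{H_0}-\lmd r=0$ yields exactly
\[
P(\tr\log f)+\vps\,\tr\log f=0.
\]

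For the second assertion, I would set $u=\tr\log f$ and test the scalar equation against $u$ itself by integrating over $X$ with respect to $\og^n/n!$. The plan is to show $\int_X u\,Pu\,\og^n\ge 0$, so that combined with $\vps>0$ and $\int_X u\,Pu+\vps\int_X u^2=0$ one concludes $u\equiv 0$, i.e.\ $\det f=\exp(\tr\log f)=1$. Up to constants, $\int_X u\,Pu\,\og^n$ is a multiple of $\sqrt{-1}\int_X u\,\bp\pt u\wedge\og^{n-1}$; integrating by parts produces two terms: a Dirichlet term $-\sqrt{-1}\int_X\bp u\wedge\pt u\wedge\og^{n-1}$, which is a positive multiple of $\int_X|\pt u|^2\og^n$, and a defect term of the form $\int_X u\,\pt u\wedge\bp\og^{n-1}=\tfrac{1}{2}\int_X\pt(u^2)\wedge\bp\og^{n-1}$. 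The Gauduchon hypothesis $\pt\bp\og^{n-1}=0$ is precisely what kills this defect term after one more integration by parts, and this is the only step that is not purely formal.

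The main subtlety is therefore conceptual rather than computational: without the Gauduchon condition the operator $P$ is not self-adjoint on functions and the integration by parts would leave a boundary-like term spoiling the sign, so the argument genuinely uses $\pt\bp\og^{n-1}=0$ at exactly one place. Everything else reduces to the trace identity for $\mfk_H$ and the determinant-logarithm formula $\tr\log f=\log\det f$.
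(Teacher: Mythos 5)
Your derivation of the scalar equation $P(\tr\log f)+\vps\,\tr\log f=0$ is correct and is essentially the paper's computation: trace the substituted equation, note that the commutator term is traceless, and use $\mathrm{Tr}(f^{-1}\pt_{H_0}f)=\pt\log\det f=\pt\,\tr\log f$ together with the normalization $\tr\Phi(H_0,\phi)=0$. Where you diverge is the deduction of $\det f=1$. The paper applies the maximum principle directly to $u=\tr\log f$: at a maximum of $u$ one has $Pu\geq 0$, hence $\vps u\leq 0$ there, so $\max_X u\leq 0$; the symmetric argument at a minimum gives $\min_X u\geq 0$, whence $u\equiv 0$. You instead run an $L^2$ energy argument, pairing the equation with $u$ and showing $\int_X u\,Pu\,\frac{\og^n}{n!}\geq 0$ by integration by parts, with the defect term $\tfrac12\int_X\pt(u^2)\wedge\bp\og^{n-1}$ annihilated by the Gauduchon condition $\pt\bp\og^{n-1}=0$. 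Both routes are valid. The maximum principle is the more elementary of the two and, notably, does not use the Gauduchon hypothesis at this step at all; your energy argument genuinely needs it, but has the virtue of making explicit where the self-adjointness of $P$ (and hence the Gauduchon condition) enters, which is the same mechanism used later in the paper (e.g.\ in the Stokes computations of Proposition \ref{prop1} and Proposition \ref{prop2}). Either way the lemma is proved.
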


\begin{proof}

By $ \partial \log \det f=\mathrm{Tr}(f^{-1}\partial f) $ and $ \log \det f=\mathrm{tr}\log f,$ we have
  \begin{equation}\label{eq23}
  \begin{split}
  \mathrm{Tr}\sqrt{-1}\wedge_{\og}\left( \bp(f^{-1}\pt_{H_0}f)\right) =&\sqrt{-1}\wedge_{\og}\bp \mathrm{Tr} (f^{-1}\partial f)\\
  =&\sqrt{-1}\wedge_{\og}\bp\partial \log\det f\\
  =&\sqrt{-1}\wedge_{\og}\bp\partial \mathrm{tr}\log f.
  \end{split}
  \end{equation}

Then combining (\ref{eq23}) with $\mathrm{Tr}\sqrt{-1}\wedge_{\og}[\phi, f^{-1}\phi^{*H_0}f-\phi^{H_0}]=0$, we conclude that
  \begin{equation*}
  \begin{split}
  0=&\mathrm{tr} L_{\vps}(f)\\
  =&\mbox{tr}\Phi(H_0, \phi)+\mbox{tr}\sqrt{-1}\wedge_{\og}\left(\bp(f^{-1}\pt_{H_0} f)\right)+\vps \mbox{tr}\log f\\
  =& P(\mbox{tr}\log f)+\vps \mbox{tr}\log f.
  \end{split}
  \end{equation*}
Furthermore, by the maximum principle, we have $\mbox{tr}\log f =0$ and $\det f=1.$
\end{proof}

\begin{lemma}\label{lm2}
If $f\in \mathrm{Herm}^+(E,H_0)$ satisfies $L_{\vps}(f)=0$ for some $\vps>0$,
then there holds that
\begin{enumerate}
  \item $\frac{1}{2}P\left(\mid \log{f}\mid_{H_0}^2\right)+\vps \mid \log{f}\mid_{H_0}^2\leq \mid\Phi(H_0, \phi)\mid_{H_0} \mid \log{f}\mid_{H_0};$
  \item  $m=\max_X\mid \log{f} \mid_{H_0}\leq \frac{1}{\vps}\cdot \max_X \mid \Phi(H_0, \phi)\mid_{H_0} $;
  \item $m\leq C\cdot (\parallel \log{f} \parallel_{L^2}+\max_X \mid \Phi(H_0, \phi)\mid_{H_0})$, where $C$ only depends on $g$ and $X$.
\end{enumerate}

\end{lemma}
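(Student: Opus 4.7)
Plan. Set $s:=\log f\in\therm(E,H_0)$ and $u:=|s|^2_{H_0}=\tr(s^2)$. All three estimates will follow from a single pointwise Bochner-type inequality.

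For part (i), the key is to establish
\begin{equation*}
\tfrac{1}{2}P(u) \le \tr\!\bigl(s\cdot\sqrt{-1}\Lambda_\og\bp(f^{-1}\pt_{H_0}f)\bigr) + \tr\!\bigl(s\cdot\sqrt{-1}\Lambda_\og[\phi,\phi^{*H}-\phi^{*H_0}]\bigr).
\end{equation*}
Since $s$ commutes with $f=e^s$, the identity $\tr(s\cdot f^{-1}\pt_{H_0}f)=\tr(s\cdot\pt_{H_0}s)=\tfrac{1}{2}\pt\tr(s^2)$ holds; applying $\bp$ followed by $\sqrt{-1}\Lambda_\og$ produces $\tfrac{1}{2}P(u)$ on one side and, on the other, the first trace above plus an error term $\sqrt{-1}\Lambda_\og\tr(\bp s\wedge f^{-1}\pt_{H_0}f)$. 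Using Donaldson's integral formula $f^{-1}\pt_{H_0}f=\int_0^1 e^{-\tau s}(\pt_{H_0}s)e^{\tau s}\,d\tau$ and diagonalizing $s$ at a fixed point, this error is shown to be nonpositive (each eigenvalue contribution has the sign $-|\partial s|^2$ times a positive kernel). Likewise, Simpson's positivity trick—the same diagonalization applied to the Higgs commutator—yields $\tr(s\cdot\sqrt{-1}\Lambda_\og[\phi,\phi^{*H}-\phi^{*H_0}])\ge 0$ pointwise, because the local expression reduces to $\sum (\lmd_\alpha-\lmd_\beta)(e^{\lmd_\alpha-\lmd_\beta}-1)|(\phi_i)_{\alpha\beta}|^2\ge 0$. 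Substituting equation (\ref{eq21}), namely
\begin{equation*}
\sqrt{-1}\Lambda_\og\bp(f^{-1}\pt_{H_0}f)+\sqrt{-1}\Lambda_\og[\phi,\phi^{*H}-\phi^{*H_0}]=-\Phi(H_0,\phi)-\vps s,
\end{equation*}
and invoking Cauchy--Schwarz gives (i).

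Part (ii) is immediate from (i) by the maximum principle: at a point $x_0\in X$ where $u$ attains its maximum $m^2$, the complex Hessian $\sqrt{-1}\pt\bp u(x_0)$ is negative semidefinite, hence $P(u)(x_0)=-\sqrt{-1}\Lambda_\og\pt\bp u(x_0)\ge 0$. Substituting into (i) at $x_0$ gives $\vps m^2\le\max_X|\Phi(H_0,\phi)|_{H_0}\cdot m$, which rearranges to the stated bound.

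For part (iii), discard the nonnegative $\vps u$ term in (i) and apply the elementary inequality $2ab\le a^2+b^2$ to obtain $P(u)\le u+C_0$ with $C_0:=\max_X|\Phi(H_0,\phi)|_{H_0}^2$. Thus $u$ is a smooth subsolution of the linear elliptic inequality $Pv\le v+C_0$ on $X$. The Gauduchon condition $\pt\bp\og^{n-1}=0$ ensures that $\int_X Pv\,\og^n=0$ for every smooth $v$, which provides exactly the ingredient needed to run standard Moser iteration for subsolutions of second-order elliptic operators without lower-order terms; this yields $\max_X u\le C(\|u\|_{L^1}+C_0)$ with $C=C(X,g)$. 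Since $\|u\|_{L^1}=\|\log f\|_{L^2}^2$, taking square roots and using $\sqrt{a+b}\le\sqrt a+\sqrt b$ produces (iii).

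The main obstacle is the Bochner inequality in part (i): the two positivity statements (Donaldson's integral-formula identity for $f^{-1}\pt_{H_0}f$ and Simpson's diagonalization trick for the Higgs commutator) are delicate bookkeeping, though standard in this circle of ideas. Given (i), parts (ii) and (iii) are routine applications of the maximum principle and Moser iteration respectively.
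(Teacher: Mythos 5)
Your proposal is correct and follows essentially the same route as the paper: pair equation (\ref{eq21}) with $\log f$, bound $\frac{1}{2}P(|\log f|^2_{H_0})$ by the curvature term (the L\"ubke--Teleman inequality, which you re-derive via Donaldson's integral formula), show the Higgs commutator term is nonnegative, then apply the maximum principle for (ii) and Moser iteration for (iii). The only cosmetic difference is that you establish the positivity of $\langle\sqrt{-1}\Lambda_{\og}[\phi,\phi^{*H}-\phi^{*H_0}],\log f\rangle_{H_0}$ by pointwise diagonalization of $s$, whereas the paper integrates the derivative of $\zeta(t)=H_0(\sqrt{-1}\Lambda_{\og}[\phi,e^{-ts}\phi^{*H_0}e^{ts}],s)$ along the segment from $H_0$ to $H_0f$; these are two standard renderings of the same computation.
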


\begin{proof}

 (\romannumeral1) Taking the point-wise inner product with $\log f$  respect to $H_0$ of both sides of
 (\ref{eq21}),
we have
\begin{eqnarray}\label{eq24}
\begin{split}
&\lag\sqrt{-1}\Lambda_{\omega}\bp(f^{-1}\circ\pt_{H_0} f),\log{f}\rag_{H_0}+\lag\sqrt{-1}\Lambda_{\omega}[\phi,\phi^{*H}-\phi^{*H_0}],\log{f}\rag_{H_0}\\
&+\vps \mid \log{f}\mid_{H_0}^2=-\lag \Phi(H_0,\phi),\log{f}\rag_{H_0}.
\end{split}
\end{eqnarray}
Set
$A=\lag\sqrt{-1}\Lambda_{\omega}\bp(f^{-1}\circ\pt_{H_0} f),\log{f}\rag_{H_0}$ and $B=\lag\sqrt{-1}\Lambda_{\omega}[\phi,\phi^{*H}-\phi^{*H_0}],\log{f}\rag_{H_0}.$  From the result in \cite[p.~74]{MA}, we have
\begin{eqnarray}\label{eq25}
P(\mid\log{f}\mid_{H_0}^2)\leq 2A.
\end{eqnarray}
Now we estimate $B$. Let $H(t)=H_0 e^{ts},$ $t\in [0,1]$ be a curve in $\therm^+(E)$ connecting $H_0$ and $H_0f$, where $s=\log{f}.$  Set   $\zeta(t)=H_0\left(\sqrt{-1}\Lambda_{\og}\left[\phi,e^{-ts}\phi^{*H_0}e^{ts}\right],s\right).$ The $t$-derivative of $\zeta(t)$ is
\begin{eqnarray*}
\frac{\mathrm{d}}{\mathrm{dt}}\zeta(t)=&\langle  \sqrt{-1}\Lambda_{\og }\left[\phi,-se^{-ts}\phi^{*H_0}e^{ts}+e^{-ts}\phi^{*H_0}e^{ts}s\right], s \rangle_{H_0} =\mid [s,e^{\frac{ts}{2}}\phi e^{-\frac{ts}{2}}]   \mid^2_{H_0}\geq 0.
\end{eqnarray*}
This implies
\begin{equation}\label{eq26}
B=\zeta(1)\geq\zeta(0)=0.
\end{equation}
By (\ref{eq24}-\ref{eq26}), we have
\begin{eqnarray*}
\frac{1}{2}P(\mid \log{f} \mid_{H_0}^2)+\vps\mid \log{f} \mid^2_{H_0}\leq- \lag\Phi(H_0, \phi),\log{f}\rag_{H_0}\leq\mid \Phi(H_0,\phi) \mid_{H_0}\mid \log{f}\mid_{H_0}.
\end{eqnarray*}

(\romannumeral2) Assuming  $\mid \log{f} \mid_{H_0}^2$ attains its maximum at $p\in X$,  we have
\begin{eqnarray*}
0\leq \frac{1}{2}P(\mid \log{f} \mid^2)(p)\leq \left(\mid \Phi(H_0,\phi) \mid_{H_0}(p)-\vps \mid \log{f} \mid_{H_0}(p)\right)\mid \log{f} \mid_{H_0}(p).
\end{eqnarray*}
Then it follows that
\begin{eqnarray*}
\max_X\mid \log{f} \mid_{H_0}=\mid \log{f} \mid_{H_0}(p)\leq \frac{1}{\vps}\mid \Phi(H_0,\phi) \mid_{H_0}(p)\leq \frac{1}{\vps} \max\limits_X\mid \Phi(H_0,\phi)\mid_{H_0}.
\end{eqnarray*}

 (\romannumeral3)   From (\romannumeral1), we get
\begin{equation*}
P(\mid \log f\mid_{H_0}^2) \leq \mid \Phi(H_0, \phi)\mid^2_{H_0}+\mid \log f\mid_{H_0}^2\leq \max_X \mid \Phi(H_0, \phi)\mid^2_{H_0}+\mid \log f\mid_{H_0}^2.
\end{equation*}
Then by  Moser's iteration,  there exist a constant $C>0$ depending on $g$ and $X$  such that
$$m\leq C\cdot (\parallel \log{f} \parallel_{L^2}+\max_X \mid \Phi(H_0, \phi)\mid_{H_0}).$$
\end{proof}

\section{Proof of Theorem \ref{B}}\label{sec3}

Before we give the detailed proof, we recall some notation. Given $\eta \in \therm(E,H),$ from  \cite[p.~237]{MA}, we can choose an open dense subset $W\subseteq X$ satisfying at each $x\in W$ there exist an open neighbourhood $U$ of $x$, a local unitary basis  $\{e_a\}_{a=1}^{r}$ respect to $H$ and functions $\{\lmd_a\in C^\infty(U,R)\}_{a=1}^{r}$ such that
\begin{eqnarray*}
\eta(y)=\sum_{a=1}^r \lmd_a(y)\cdot e_a(y)\otimes e^a(y)
\end{eqnarray*}
for all $y\in U$, where  $\{e^a\}_{a=1}^{r}$ denotes the dual basis of $E^*$. Let $\vph\in C^{\infty}(R,R),$   $\Psi\in C^{\infty}(R\times R, R)$ and $A=\sum^r_{a,b=1}A^b_a e^a\otimes e_b\in \tend(E).$ We denote $\vph(\eta)$ and $\Psi(\eta)(A)$ by
\begin{equation}\label{eq31}
\vph(\eta)(y)=\sum^r_{a=1}\vph(\lmd_a)e_a\otimes e^a
\end{equation}
and
\begin{equation}\label{eq32}
\Psi(\eta)(A)(y)=\Psi(\lmd_a,\lmd_b)A_a^b e^a\otimes e_a.
\end{equation}

\begin{prop}\label{prop1}
If $f\in \therm^+(E,H_0)$ solves (\ref{eq21}) for some $\vps$, then there holds
\begin{equation}\label{eq33}
\int_X \tr(\Phi(H_0,\phi) s)\frac{\og^n}{n!}+\int_X\lag\Psi(s)(\mathcal{D}''s), \mathcal{D}''s\rag_{H_0}\frac{\og^n}{n!}=-\vps \pal s \pal^2_{L^2},
\end{equation}
where $s=\log f$, $\mfd'' =\bp_E+ \phi$ and
\begin{equation*}
\Psi(x,y)=
\begin{cases}
&\frac{e^{y-x}-1}{y-x},\ \ \ x\neq y;\\
&\ \ \ \  1,\ \ \ \ \ \  x=y.
\end{cases}
\end{equation*}
\end{prop}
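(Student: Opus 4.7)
The approach is to take the pointwise $H_0$-inner product of equation~(\ref{eq21}) with $s:=\log f$ and integrate over $X$ against $\og^{n}/n!$. Two of the four resulting terms simplify immediately: since $\Phi(H_0,\phi)$ is $H_0$-Hermitian, $\int_X\lag\Phi(H_0,\phi),s\rag_{H_0}\,\og^n/n!=\int_X\tr(\Phi(H_0,\phi)s)\,\og^n/n!$, and the last term contributes $\vps\pal s\pal_{L^2}^2$. What remains is to identify the sum of the $\bp$-term and the Higgs commutator term with $\int_X\lag\Psi(s)(\mfd''s),\mfd''s\rag_{H_0}\,\og^n/n!$.

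I would exploit the bidegree decomposition $\mfd''s=\bp_E s+[\phi,s]$, whose summands are $\tend(E)$-valued $(0,1)$- and $(1,0)$-forms respectively. Because $\Psi(s)$ preserves form bidegrees and the pointwise inner product on $\tend(E)$-valued forms is orthogonal across distinct bidegrees, $\lag\Psi(s)(\mfd''s),\mfd''s\rag_{H_0}$ splits pointwise into $\lag\Psi(s)(\bp_E s),\bp_E s\rag_{H_0}+\lag\Psi(s)([\phi,s]),[\phi,s]\rag_{H_0}$. It therefore suffices to match the $\bp$-term of the left-hand side with the first summand and the Higgs commutator term with the second.

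The Higgs identity is a pointwise algebraic computation: diagonalize $s$ in a local $H_0$-unitary frame with eigenvalues $\lmd_a$; using $\phi^{*H}=f^{-1}\phi^{*H_0}f=e^{-s}\phi^{*H_0}e^s$ one obtains $(\phi^{*H}-\phi^{*H_0})_a^b=(e^{\lmd_a-\lmd_b}-1)(\phi^{*H_0})_a^b$, which factors as $(\lmd_a-\lmd_b)\Psi(\lmd_b,\lmd_a)(\phi^{*H_0})_a^b$, and combined with $[s,\phi^{*H_0}]_a^b=(\lmd_b-\lmd_a)(\phi^{*H_0})_a^b$ this matches the target after computing $\sqrt{-1}\Lambda_{\og}[\phi,\cdot]$ and pairing with $s$. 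For the $\bp$-identity, rewrite the integrand using $\sqrt{-1}\Lambda_{\og}(\bp A)\og^n/n!=\sqrt{-1}\bp A\wedge\og^{n-1}/(n-1)!$ with $A=f^{-1}\pt_{H_0}f$, then integrate by parts against $\bp s$ via Stokes' theorem; the Gauduchon condition $\pt\bp\og^{n-1}=0$ is precisely what allows the $\og$-derivative error terms to be discarded. Expanding $f^{-1}\pt_{H_0}f=e^{-s}\pt_{H_0}e^s$ in the diagonal frame via the derivative-of-exponential formula yields $(f^{-1}\pt_{H_0}f)_a^b=\Psi(\lmd_b,\lmd_a)(\pt_{H_0}s)_a^b$, and conjugating through $(\pt_{H_0}s)^{*H_0}=\bp_E s$ (valid because $s$ is $H_0$-Hermitian) produces the target expression.

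The principal difficulty is the $\bp$-identity on a non-K\"ahler Gauduchon background: one must organize the Stokes integration so that the only $\og$-derivative ever required is $\pt\bp\og^{n-1}$, and track signs through both the Leibniz rule and the exponential-derivative expansion to land on the function $\Psi(x,y)=(e^{y-x}-1)/(y-x)$ with the convention~(\ref{eq32}). Once that identity is in hand, the Higgs part and the bidegree-orthogonality step are routine.
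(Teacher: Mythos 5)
Your plan follows essentially the same route as the paper: pair equation (\ref{eq21}) with $s$, integrate, reduce the $\bp$-term by Stokes together with the Gauduchon condition, and identify the $\bp$-term and the Higgs commutator term with the two bidegree components of $\lag\Psi(s)(\mfd''s),\mfd''s\rag_{H_0}$ via a local diagonalization of $s$ (the paper merely packages the two pieces together as $\mathrm{Tr}\,\sqrt{-1}\Lambda_{\og}\{f^{-1}\mfd'f\wedge\mfd''s\}$ with $\mfd'=\pt_{H_0}+\phi^{*H_0}$ before performing the same local computation). The one identity you should make explicit in order to discard the $\bp\,\og^{n-1}$ term is $\mathrm{Tr}(f^{-1}(\pt_{H_0}f)s)=\tfrac{1}{2}\pt\,\mathrm{Tr}(s^2)$, which follows from $sf=fs$ and converts that term into one annihilated by $\pt\bp\og^{n-1}=0$.
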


\begin{proof}
First, (\ref{eq21}) gives
\begin{equation}\label{eq34}
\begin{split}
&\int_X \tr(\Phi(H_0,\phi) s)\frac{\og^n}{n!}+\int_X\lag\sqrt{-1}\Lambda_{\og}\bp(f^{-1}\pt_{H_0}f), s\rag_{H_0}\frac{\og^n}{n!}\\
+&\int_X \lag\sqrt{-1}\Lambda_{\og}[\phi, \phi^{*H}-\phi^{*H_0}], s\rag_{H_0}\frac{\og^n}{n!}
+\vps\pal s\pal^2_{L^2}=0,
\end{split}
\end{equation}
where $H=H_0 f.$
Then comparing (\ref{eq34}) with (\ref{eq33}), it is sufficient to show
\begin{equation}\label{eq35}
\int_X \lag\sqrt{-1}\Lambda_{\og}\bp(f^{-1}\pt_{H_0}f)+[\phi, \phi^{*H}-\phi^{*H_0}], s\rag_{H_0}\frac{\og^n}{n!}=\int_X\lag\Psi(s)(\mathcal{D}''s), \mathcal{D}''s\rag_{H_0}\frac{\og^n}{n!}.
\end{equation}
We will divide the proof of (\ref{eq35}) into the following two steps.

\medskip
\emph{Step 1} We show that
\begin{equation}\label{eq36}
   \int_X \lag\sqrt{-1}\Lambda_{\og}\bp(f^{-1}\pt_{H_0}f)+[\phi, \phi^{*H}-\phi^{*H_0}], s\rag_{H_0}\frac{\og^n}{n!}=\int_X \mathrm{Tr}\sqrt{-1}\Lambda_{\og}\{f^{-1}\mathcal{D}'f\wedge \mathcal{D}''s\}\frac{\og^n}{n!},
\end{equation}
 where $\mfd'=\pt_{H_0}+\phi^{*H_0}$.

 By using Stokes formula, we have
\begin{equation}\label{eq37}
\begin{split}
  &\int_X \lag\sqrt{-1}\Lambda_{\og}\bp(f^{-1}\pt_{H_0}f), s\rag_{H_0}\frac{\og^n}{n!}\\
=&\int_X\bp\left(\mathrm{Tr}\{ \sqrt{-1}f^{-1}(\pt_{H_0} f) s\}\frac{\og^{n-1}}{(n-1)!}\right)+\int_X\mathrm{Tr}\{\sqrt{-1}f^{-1}\pt_{H_0} f \bp s\}\frac{\og^{n-1}}{(n-1)!}\\
&+\int_X \mathrm{Tr}\{\sqrt{-1}f^{-1}(\pt_{H_0} f) s\}\bp\frac{\og^{n-1}}{(n-1)!} \\
=&\int_X\mathrm{Tr}\{\sqrt{-1}f^{-1}\pt_{H_0} f \bp s\}\frac{\og^{n-1}}{(n-1)!}+\int_X \mathrm{Tr}\{\sqrt{-1}f^{-1}(\pt_{H_0} f) s\}\bp\frac{\og^{n-1}}{(n-1)!}.
\end{split}
\end{equation}
 Since $sf=fs$, it follows that
 \begin{equation}\label{eq38}
 \begin{split}
 \mathrm{Tr}\left(f^{-1}(\pt_{H_0} f) s\right)=&\mathrm{Tr}\left(f^{-1}(\pt_{H_0} \sum_{k=1}^{+\infty}\frac{s^k}{k!}) s\right)
 =\mathrm{Tr}\left(f^{-1}\sum_{k=1}^{+\infty}\sum_{j=0}^{k-1}\displaystyle{\frac{s^j(\pt_{H_0}s) s^{k-1-j}}{k!}} s\right)\\
 =&\mathrm{Tr}\left(f^{-1}\sum_{k=1}^{+\infty}\sum_{j=0}^{k-1}\displaystyle{\frac{s^k\pt_{H_0}s }{k!}}\right)
 =\mathrm{Tr}\left(f^{-1}\sum_{k=1}^{+\infty}\displaystyle{\frac{s^k\pt_{H_0}s }{(k-1)!}}\right)\\
 =&\mathrm{Tr}\left(f^{-1}\sum_{k=1}^{+\infty}\displaystyle{\frac{s^{k-1}}{(k-1)!}}s\pt_{H_0}s\right)
 =\mathrm{Tr}( s\pt_{H_0}s).
 \end{split}
 \end{equation}
(\ref{eq38}) together with $\pt\bp \og^{n-1}=0$ gives
\begin{equation}\label{eq39}
\begin{split}
\int_X \mathrm{Tr}(\sqrt{-1}f^{-1}(\pt_{H_0} f) s)\bp \frac{\og^{n-1}}{(n-1)!}=\int_X \frac{1}{2} \pt\mathrm{Tr}(\sqrt{-1}s^2)\bp \frac{\og^{n-1}}{(n-1)!}=0.
\end{split}
\end{equation}
From (\ref{eq37}) and (\ref{eq39}), we have
\begin{equation}\label{eq310}
\int_X \lag\sqrt{-1}\Lambda_{\og}\bp(f^{-1}\pt_{H_0}f), s\rag_{H_0}\frac{\og^n}{n!}=\int_X\mathrm{Tr}\{\sqrt{-1}f^{-1}\pt_{H_0} f \bp s\}\frac{\og^{n-1}}{(n-1)!}.
\end{equation}
Then noticing that $\tr(AB)=(-1)^{pq} \tr(BA),$ where $A$ is an $\tend(E)$ valued $p$-form and $B$ is an $\tend(E)$ valued $q$-form, there holds
\begin{equation}\label{eq311}
\begin{split}
&\int_X \mathrm{Tr}\{\sqrt{-1}\Lambda_{\og}\left[\phi, f^{-1}\phi^{*H_0}f-\phi^{*H_0}\right]s \}\frac{\og^n}{n!}\\
=&\int_X \sqrt{-1} \mathrm{Tr}\{ \phi f^{-1}\phi^{*H_0}fs+f^{-1}\phi^{*H_0}f\phi s-(\phi\phi^{*H_0}s+\phi^{*H_0}\phi s)\}\frac{\og^{n-1}}{(n-1)!}\\
=&\int_X \sqrt{-1}\mathrm{Tr}\{ -f^{-1}\phi^{*H_0} fs\phi+f^{-1}\phi^{*H_0} f \phi s+\phi^{*H_0}s\phi-\phi^{*H_0}\phi s\}\frac{\og^{n-1}}{(n-1)!}\\
=&\int_X\sqrt{-1}\mathrm{Tr}\{ f^{-1}[\phi^{*H_0}, f][\phi, s]\}\frac{\og^{n-1}}{(n-1)!}.
\end{split}
\end{equation}
Therefore, we complete Step 1 by substituting (\ref{eq310}) and (\ref{eq311}) into the left hand side of (\ref{eq36}).

\emph{Step 2} We show that
 \begin{equation}\label{eq312}
\mathrm{Tr}\sqrt{-1}\Lambda_{\og}\{f^{-1}\mathcal{D}'f\wedge \mathcal{D}''s\}=\lag \Psi(s)(\mfd'' s), \mfd'' s\rag_{H_0}
\end{equation}
 holds on $X$.

 From \cite[p.~237-238]{MA}, there exists an open dense subset $W\subseteq X$ such that at each $x\in W$, one has
\begin{equation*}
\mathcal{D}'f(x)=e^{\lmd_a}\pt\lmd_a e_a\otimes e^a+(e^{\lmd_b}-e^{\lmd_a})(A_b^a+\ol{\phi_a^b})e_a\otimes e^b
\end{equation*}
and
\begin{equation*}
\mathcal{D}''s(x)=\bp\lmd_a e_a\otimes e^a+(\lmd_b-\lmd_a)\left(  -\ol{A^b_a}+\phi_b^a \right)e_a\otimes e^b,
\end{equation*}
where $\{e_a\}^r_{a=1}$ is a local unitary basis of $E$ respect to $H_0$ and the $(1,0)$-forms $A_a^b$ are defined by $\pt_{H_0}e_a=A_a^b e_b.$

It follows that at each $x\in W$,
\begin{equation*}
\begin{split}
&\mathrm{Tr}\sqrt{-1}\Lambda_{\og}\{f^{-1}\mathcal{D}'f \wedge \mathcal{D}''s\}\\
=&\sum_{a=1}^{r}\mid \bp\lmd_a \mid^2+
\sum_{a\neq b}(e^{\lmd_b-\lmd_a}-1)(\lmd_a-\lmd_b)\sqrt{-1}\Lambda_{\og}(A_b^a+\ol{\phi_a^b})\wedge(-\ol{A_b^a}+\phi_a^b)\\
=&\sum^r_{a=1}\mid \bp \lmd_a\mid^2+\sum_{a\neq b}\frac{e^{\lmd_b-\lmd_a}-1}{\lmd_b-\lmd_a}(\lmd_b-\lmd_a)^2\mid -\ol{A_b^a}+\phi_a^b\mid^2\\
=&\sum_{a,b}\Psi(\lmd_a,\lmd_b)\mid(\mfd''s)^b_a\mid^2.
\end{split}
\end{equation*}
We now turn to calculating the right hand side of (\ref{eq312}). By the construction (\ref{eq32}), we have
\begin{equation*}
\begin{split}
\Psi(s)(\mfd''s)=&\bp\lmd_a e_a\otimes e^a+\Psi(\lmd_a, \lmd_b)(\lmd_a-\lmd_b)(-\ol{A_b^a}+\phi_a^b)e_b\otimes e^a\\
=&\sum^r_{a=1}\bp\lmd_a e_a\otimes e^a+\sum_{a\neq b}\frac{e^{\lmd_b-\lmd_a}-1}{\lmd_b-\lmd_a}(\lmd_a-\lmd_b)(-\ol{A_b^a}+\phi_a^b)e_b\otimes e^a
\end{split}
\end{equation*}
Then at each $x\in W$ there holds
\begin{equation*}
\begin{split}
\lag \Psi(s)(\mathcal{D}''s),  \mathcal{D}''s  \rag_{H_0}
=&\sum^r_{a=1}\mid \bp\lmd_a \mid^2+\sum_{a\neq b}(e^{\lmd_b-\lmd_a}-1)(\lmd_b-\lmd_a)\mid -\ol{A_b^a}+\phi_a^b \mid^2\\
=&\sum_{a,b}\Psi(\lmd_a,\lmd_b)\mid(\mfd''s)^b_a\mid^2\\
=&\mathrm{Tr}\sqrt{-1}\Lambda_{\og}\{f^{-1}\mathcal{D}'f\wedge \mathcal{D}''s\}.
\end{split}
\end{equation*}

This forces
\begin{equation}\label{eq313}
 \lag \Psi(s)(\mathcal{D}''s),  \mathcal{D}''s  \rag_{H_0}=\mathrm{Tr}\sqrt{-1}\Lambda_{\og}\{f^{-1}\mathcal{D}'f\wedge \mathcal{D}''s\}
 \end{equation}
 holds on $X.$
So, combining (\ref{eq36}) with (\ref{eq313}) we have (\ref{eq35}).
\end{proof}
Then, we prove the ``only if '' part of Theorem \ref{B}. In fact, we prove the following theorem
\begin{thm}\label{A.5}
If Higgs bundle $(E, \bp_E, \phi)$ is $\og$-semi-stable, then  $\max\limits_X \mid \Phi(H_{\vps}, \phi)\mid_{H_{\vps}}\rightarrow 0$ as $\vps\rightarrow 0.$
\end{thm}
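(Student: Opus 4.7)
Since $s_\vps:=\log f_\vps$ is self-adjoint with respect to both $H_0$ and $H_\vps:=H_0 f_\vps$, its norms in the two metrics coincide; combined with the equation $L_\vps(f_\vps)=0$ in (\ref{eq11}), which reads $\Phi(H_\vps,\phi)=-\vps\,s_\vps$, this gives
\[
\max_X|\Phi(H_\vps,\phi)|_{H_\vps} \;=\; \vps\cdot\max_X|s_\vps|_{H_0}.
\]
Hence the theorem reduces to showing $\vps\,m_\vps\to 0$, where $m_\vps:=\max_X|s_\vps|_{H_0}$. I argue by contradiction: assume $\vps_k\downarrow 0$ and $\delta>0$ with $\vps_k m_{\vps_k}\ge\delta$. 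Lemma \ref{lm2}(ii) then forces $m_{\vps_k}\to\infty$, Lemma \ref{lm2}(iii) forces $l_k:=\|s_{\vps_k}\|_{L^2}\to\infty$, and the same lemma bounds $m_{\vps_k}/l_k$ uniformly. Normalize $u_k:=s_{\vps_k}/l_k$, so $\|u_k\|_{L^2}=1$ and $\|u_k\|_{L^\infty}$ is uniformly bounded.

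Substituting $s_{\vps_k}=l_k u_k$ into Proposition \ref{prop1} and dividing by $l_k^2$ yields
\[
\frac{1}{l_k}\int_X \tr\bigl(\Phi(H_0,\phi)\,u_k\bigr)\frac{\og^n}{n!}
\;+\;\int_X\bigl\langle\Psi(l_k u_k)(\mfd'' u_k),\mfd'' u_k\bigr\rangle_{H_0}\frac{\og^n}{n!}
\;=\;-\vps_k.
\]
The first term is $O(1/l_k)$ and the right-hand side tends to $0$, so the weighted Dirichlet integral on the left also vanishes in the limit. Following Simpson's technique as adapted to the Gauduchon setting in \cite{MA,MA1,simpson1988constructing}: for any smooth $\widetilde{\Psi}:\mathbb{R}\times\mathbb{R}\to\mathbb{R}_{>0}$ majorized by $\Psi(l\cdot,l\cdot)$ for large $l$, the bound above yields a uniform $L^2_1$-estimate on $u_k$ controlled by $\widetilde{\Psi}$. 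Extracting a subsequence, $u_k\to u_\infty$ weakly in $L^2_1$ and strongly in $L^2$; the limit $u_\infty$ is nonzero (since $\|u_\infty\|_{L^2}=1$), trace-free (Lemma \ref{lm1}), $L^\infty$-bounded, self-adjoint with respect to $H_0$, and by the Uhlenbeck--Yau regularity theorem its eigenvalues are constant on $X$.

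Let $\mu_1<\cdots<\mu_\ell$ be these eigenvalues and, for $c_i\in(\mu_i,\mu_{i+1})$, set $\pi_{c_i}:=\chi_{(-\infty,c_i]}(u_\infty)$. Each $\pi_{c_i}$ is a weakly holomorphic $L^2_1$-projection, hence by Uhlenbeck--Yau regularity represents a coherent subsheaf $\mf_{c_i}\hookrightarrow E$; vanishing of the $\phi$-component of the limiting Dirichlet energy (captured via the commutator contribution to $\mfd''=\bp_E+\phi$) forces $\phi(\mf_{c_i})\subseteq\mf_{c_i}\otimes\Omega^{1,0}_X$, so each $\mf_{c_i}$ is $\phi$-invariant. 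Expanding $u_\infty=\mu_\ell\mathrm{Id}_E-\sum_i(\mu_{i+1}-\mu_i)\pi_{c_i}$, pairing with $\Phi(H_0,\phi)$, passing to the limit in Proposition \ref{prop1}, and applying the Chern--Weil formula
\[
\deg_\og(\mf_{c_i}) \;=\; \int_X \tr\bigl(\pi_{c_i}\Phi(H_0,\phi)\bigr)\frac{\og^n}{n!} \;-\; \int_X|\mfd''\pi_{c_i}|^2_{H_0}\frac{\og^n}{n!} \;+\;\text{const}\cdot\mathrm{rk}(\mf_{c_i}),
\]
valid on a compact Gauduchon manifold, one concludes
\[
\sum_i(\mu_{i+1}-\mu_i)\bigl(\mu_\og(E)\,\mathrm{rk}(\mf_{c_i})-\deg_\og(\mf_{c_i})\bigr) < 0.
\]
Since each $\mu_{i+1}-\mu_i>0$, at least one $\mf_{c_i}$ satisfies $\mu_\og(\mf_{c_i})>\mu_\og(E)$, contradicting semi-stability of $(E,\bp_E,\phi)$.

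The chief obstacle is the passage to the limit: obtaining enough Sobolev compactness from the degenerate weight $\Psi(l_k u_k)$ (whose values vary between $0$ and $+\infty$ as $l_k\to\infty$, depending on the relative size of eigenvalues), verifying that the limiting spectral projections define $\phi$-invariant coherent subsheaves, and extracting the strict inequality above from the limit of Proposition \ref{prop1}. This is where the Higgs structure must be handled with care, since the standard Uhlenbeck--Yau argument has to accommodate the extra $\phi$-terms in $\mfd''$ and in $\Psi(s)(\mfd'' s,\mfd'' s)$.
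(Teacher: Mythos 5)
Your overall strategy --- a Simpson-type blow-up of the perturbed solutions $f_{\vps}$, a weak $L^2_1$ limit $u_\infty$, Uhlenbeck--Yau spectral projections, and the Chern--Weil formula --- is the same as the paper's. However, there are two genuine gaps, both traceable to your normalization of the identity in Proposition \ref{prop1}. First, after dividing by $l_k^2$ the weight left in the Dirichlet term is $\Psi(l_k u_k)$, and $\Psi(l_k x,l_k y)=\frac{e^{l_k(y-x)}-1}{l_k(y-x)}\to 0$ as $l_k\to\infty$ whenever $x>y$. So on the eigenvalue pairs with $\lmd_a>\lmd_b$ the weight degenerates to zero, the vanishing of your weighted Dirichlet integral gives no control on the corresponding components of $\mfd'' u_k$, and in fact there is no smooth \emph{positive} $\widetilde\Psi$ majorized by $\Psi(l\cdot,l\cdot)$ for all large $l$: your comparison class is empty. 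The paper divides by $l_i$ only once, so the relevant weight is $l_i\Psi(l_i x,l_i y)$, which increases monotonically to $(x-y)^{-1}$ for $x>y$ and to $+\infty$ for $x\le y$; on $[-C_2,C_2]^2$ this is eventually bounded below by the positive constant $\frac{1}{3C_2}$, and that is precisely what produces the uniform bound on $\pal \mfd'' u_k\pal_{L^2}$ and permits passing to the limit against any $\zeta$ with $\zeta(x,y)<(x-y)^{-1}$.

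Second, and more seriously, your concluding inequality $\sum_i(\mu_{i+1}-\mu_i)\bigl(\mu_{\og}(E)\,\mathrm{rank}(\mf_{c_i})-\deg_{\og}(\mf_{c_i})\bigr)<0$ is asserted with strict inequality, but your normalized identity has right-hand side $-\vps_k\to 0$, so in the limit you can only conclude $\le 0$ --- which is perfectly consistent with semi-stability (all slopes equal) and yields no contradiction. The strict inequality in the paper comes from keeping the right-hand side $-\vps_i l_i$ and combining the contradiction hypothesis $\vps_i\max_X|\log f_{\vps_i}|_{H_{\vps_i}}\ge\delta$ with Lemma \ref{lm2}(iii) to obtain $\vps_i l_i\ge \frac{\delta}{C}-\vps_i\max_X|\Phi(H_0,\phi)|_{H_0}$; the uniform constant $C^*=\frac{\delta}{C}>0$ then survives the limit, giving $C^*+\int_X\tr(u_\infty\Phi(H_0,\phi))+\lag\zeta(u_\infty)\mfd'' u_\infty,\mfd'' u_\infty\rag_{H_0}\frac{\og^n}{n!}\le 0$ and finally $\nu<-C^*<0$. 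You never derive this quantitative lower bound on $\vps_k l_k$, so your argument as written produces no destabilizing subsheaf. (A minor additional point: the constancy of the eigenvalues of $u_\infty$ is not the Uhlenbeck--Yau regularity statement but Simpson's Lemma~5.5 argument applied to the limiting inequality; Uhlenbeck--Yau regularity is used afterwards, to promote the projections $\pi_\alpha$ to coherent Higgs subsheaves.)
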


\begin{proof}
Let $\{f_{\vps}\}_{0<\vps\leq 1}$ be the solutions of equation (\ref{eq21}) with background metric $H_0.$ Then there holds that
\begin{equation*}
\pal \log f_{\vps}\pal^2_{L^2}=-\frac{1}{\vps}\int_X \langle \Phi(H_{\vps},\phi), \log f \rangle_{H_{\vps}}\displaystyle{\frac{\og^n}{n!}}.
\end{equation*}

\textbf{Case 1}, $\exists C_1>0$ such that $\pal \log{f_{\vps}}\pal_{L^2}< C_1<+\infty $.
From Lemma \ref{lm2}, we have
\begin{equation*}
\max\limits_X \mid \Phi(H_{\vps}, \phi)\mid_{H_{\vps}}=\vps\cdot\max\limits_X \mid \log f_{\vps}\mid_{H_{\vps}}<\vps C\cdot(C_1+\max\limits_X\mid\Phi(H_0,\phi)\mid_{H_0}).
\end{equation*}
Then it follows that $\max\limits_X \mid \Phi(H_{\vps}, \phi)\mid_{H_{\vps}}\rightarrow 0$ as $\vps\rightarrow 0.$

\textbf{Case 2}, $\overline{\lim\limits_{\vps\rightarrow 0}}\pal \log{f_{\vps}}\pal_{L^2}\rightarrow \infty.$

\textbf{Claim}  If $(E,\bp_E, \phi)$ is semi-stable, there holds
\begin{equation}\label{eq314}
\lim\limits_{\vps\rightarrow 0}\max\limits_X \mid \Phi(H_{\vps}, \phi)\mid_{H_{\vps}}=\lim\limits_{\vps\rightarrow 0}\vps \max\limits_X \mid \log f_{\vps}\mid_{H_{\vps}}=0.
\end{equation}

We will follow Simpson's argument (\cite[Proposition~5.3]{simpson1988constructing}) to show that if the claim does not hold, there exists a Higgs subsheaf contradicting the semi-stability.

If the claim does not hold,  then there exist $\delta>0$ and a subsequence $\vps_i \raw 0,\ i\raw +\infty$, such that
 $$\pal\log f_{\vps_i}\pal_{L^2}\raw +\infty$$
 and
\begin{equation}\label{a.3}
\max\limits_X \mid \Phi(H_{\vps_i},\phi) \mid_{H_{\vps_i}}=\vps_i \max\limits_X \mid \log f_{\vps_i}\mid_{H_{\vps_i}}\geq \delta .
\end{equation}
Setting $s_{\vps_i}=\log f_{\vps_i},$ $l_i=\pal s_{\vps_i} \pal_{L^2}$ and $u_{\vps_i}=s_{\vps_i}/l_{i}$, it follows that $\tr u_{\vps_i}=0$ and $\pal u_{\vps_i} \pal_{L^2}=1.$ Then combining (\ref{a.3}) with Lemma 2.2 (\romannumeral3), we have
\begin{equation}\label{a.4}
l_i\geq \frac{\delta}{C\vps_i}-\max\limits_X \mid \Phi(H_0,\phi) \mid_{H_0}.
\end{equation}
and
\begin{equation}\label{a.5}
\max\limits_X\mid u_{\vps_i}\mid<\frac{C}{l_i}(l_i+\max\limits_X \mid \Phi(H_0,\phi) \mid)<C_2<+\infty.
\end{equation}

\medskip
$Step\  1$  We show that $\pal u_{\vps_i} \pal_{L^2_1}$ are uniformly bounded. Since  $\pal u_{\vps_i} \pal_{L^2}=1$, we only need to  prove $\pal \mathcal{D}''u_{\vps_i} \pal_{L^2}$ are uniformly bounded.

By (\ref{eq11}) and Proposition \ref{prop1}, for each $f_{\vps_i},$ there holds
\begin{equation}\label{eq315}
  \int_X\mathrm{Tr}\{\Phi(H_0,\phi) u_{\vps_i}\}\frac{\og^n}{n!}+l_i\int_X \lag\Psi(l_{i}u_{\vps_i})(\mfd''u_{\vps_i}), \mfd''u_{\vps_i}\rag_{H_0}\frac{\og^n}{n!}=-\vps_i l_i
 \end{equation}
Substituting (\ref{a.4}) into (\ref{eq315}), we have
\begin{equation}\label{eq3160}
C^*+\int_X\mathrm{Tr}\{\Phi(H_0,\phi) u_{\vps_i}\}+\lag l_i\Psi(l_{i}u_{\vps_i})(\mfd''u_{\vps_i}), \mfd''u_{\vps_i}\rag_{H_0}\frac{\og^n}{n!}\leq \vps_i\max\limits_X\mid\Phi(H_0,\phi)\mid_{H_0},
\end{equation}
where $C^*=\frac{\delta}{C}$.

 Consider the function
\begin{equation}
l\Psi(lx,ly)=
\begin{cases}
\ \ \ \ l,\ \ &x=y;\\
\frac{e^{l(y-x)}-1}{y-x},\ \ &x\neq y.
\end{cases}
\end{equation}
 From (\ref{a.5}), we may assume that $(x,y)\in [-C_2,C_2]\times[-C_2,C_2].$ It is easy to check that

\begin{equation}\label{a1}
l\Psi(lx,ly)\lraw
\begin{cases}
(x-y)^{-1},\ \ \ &x>y;\\
\ \ +\infty,\ \ \ &x\leq y,
\end{cases}
\end{equation}
increases monotonically as $l\raw +\infty.$
 Let $ \zeta \in C^{\infty} (R\times R, R^+)$ satisfying $\zeta(x,y)<(x-y)^{-1}$ whenever $x>y.$ From (\ref{eq3160}), (\ref{a1}) and the arguments in Lemma 5.4(\cite{simpson1988constructing}), we have
 \begin{equation}\label{a.2}
 C^*+\int_X\tr\{u_{\vps_i}\Phi(H_0,\phi)\}+\lag\zeta(u_{\vps_i})\mfd''u_{\vps_i},\mfd''u_{\vps_i}\rag_{H_0}\frac{\og^n}{n!}\leq \vps_i\max\limits_X\mid\Phi(H_0,\phi)\mid_{H_0},
 \end{equation}
when $i\gg 0.$ Particularly, we take $\zeta(x,y)=\frac{1}{2C_2}.$ It is obvious that when $(x,y)\in [-C_2,C_2]\times[-C_2,C_2]$ and $x>y,$ $\frac{1}{3C_2}<\frac{1}{x-y}.$ This implies that
$$ C^*+\int_X\tr\{u_{\vps_i}\Phi(H_0,\phi)\}+\frac{1}{3C_2}\mid\mfd''u_{\vps_i}\mid^2_{H_0}\frac{\og^n}{n!}\leq \vps_i\max\limits_X\mid\Phi(H_0,\phi)\mid_{H_0},$$
when $i\gg 0.$ Then we have
$$\int_X\mid\mfd''u_{\vps_i}\mid^2_{H_0}\frac{\og^n}{n!}\leq 3C_2 \max\limits_X\mid\Phi(H_0,\phi)\mid_{H_0}( \mathrm{Vol}(X)^{\frac{1}{2}}+1).$$
 Thus, $u_{\vps_i}$ are bounded in $L_1^2.$ We can choose  subsequence $\{u_{\vps_{i_j}}\}$ such that $u_{\vps_{i_j}}\rightharpoonup u_{\infty}$ weakly in $L^2_1.$  We still write it $\{u_{\vps_i}\}^{\infty}_{i=1}$ for simplicity. Noting that $L_1^2\hookrightarrow L^2,$ we have
 $$1=\int_X \mid u_{\vps_i}\mid^2_{H_0}\raw \int_X \mid u_{\infty}\mid^2_{H_0}.$$
 This indicates that $\pal u_{\infty}\pal_{L^2}=1$ and $u_{\infty}$ is nontrivial.

So by (\ref{a.2}) and the same discussion in Lemma 5.4 (\cite{simpson1988constructing}), there holds
\begin{equation}\label{a.6}
 C^*+\int_X\tr\{u_{\infty}\Phi(H_0,\phi)\}+\lag\zeta(u_{\infty})\mfd''u_{\infty},\mfd''u_{\infty}\rag_{H_0}\frac{\og^n}{n!}\leq 0.
 \end{equation}

\medskip
$Step\  2$ By using Uhlenbeck and Yau's trick in \cite{UhYau} to construct a Higgs sub-sheaf which contradicts the semi-stability of $E$.

From (\ref{a.6}) and the technique in Lemma 5.5 in (\cite{simpson1988constructing}), we have the eigenvalues of $u_{\infty}$ are constant almost everywhere. Let $\mu_1<\mu_2<\cdots\mu_l$ be the distinct eigenvalues of $u_{\infty}$. The facts that  $\tr(u_{\infty})=\tr(u_{\vps_i})=0$ and $\pal u_{\infty}\pal_{L^2}=1$  force $2\leq l\leq r.$
For each $\mu_{\alpha}\ (1\leq\alpha\leq l-1),$ we construct a function $P_{\alpha}: R\lraw R$ such that
\begin{equation*}
P_{\alpha}=
\begin{cases}
&1,\ \ \ \ \ x\leq \mu_{\alpha}\\
&0,\ \ \ \ \ x\geq \mu_{\alpha+1}.
\end{cases}
\end{equation*}
Setting $\pi_{\alpha}=P_{\alpha}(u_{\infty}),$ from \cite[p.~887]{simpson1988constructing}, we have
\begin{enumerate}
  \item $\pi_{\alpha}\in L^2_1;$
  \item $\pi^2_{\alpha}=\pi_{\alpha}=\pi_{\alpha}^{*H_0};$
  \item $(\mathrm{Id}-\pi_{\alpha})\bp \pi_{\alpha}=0;$
  \item $(\mathrm{Id}-\pi_{\alpha})[\phi, \pi_{\alpha}]=0.$
\end{enumerate}

By Uhlenbeck and Yau's regularity statement of $L^2_1$-subbundle (\cite{UhYau}), $\{\pi_{\alpha}\}^{l-1}_{\alpha=1}$ determine $l-1$  Higgs sub-sheaves of $E$. Set $E_{\alpha}=\pi_{\alpha}(E).$  Since $\mathrm{tr}u_{\infty}=0$ and
$u_{\infty}=\mu_l Id-\sum^{l-1}_{\alpha=1}(\mu_{\alpha+1}-\mu_{\alpha})\pi_{\alpha},$
there holds
\begin{eqnarray}\label{eq318}
\mu_{l}\mathrm{rank}{E}=\sum^{l-1}_{\alpha=1}(\mu_{\alpha+1}-\mu_{\alpha})\mathrm{rank}{E_{\alpha}},
\end{eqnarray}

Construct
\begin{equation*}
\nu=\mu_l\deg(E)-\sum^{l-1}_{\alpha=1}(\mu_{\alpha+1}-\mu_{\alpha})\deg(E_{\alpha}).
\end{equation*}

From one hand, substituting (\ref{eq318}) into $\nu$,
\begin{equation}\label{eq319}
\nu=\sum^{l-1}_{\alpha=1}(\mu_{\alpha+1}-\mu_{\alpha})\mathrm{rank}{E_{\alpha}}\left( \displaystyle{\frac{\deg(E)}{\mathrm{rank}{E}}}-
\displaystyle{\frac{\deg(E_{\alpha})}{\mathrm{rank}{E_{\alpha}}}}\right)
\end{equation}

From the other hand,  substituting the Chern-Weil formula (Prop. 2.3 in \cite{bruasse})
\begin{equation*}
\deg(E_{\alpha})=\int_{X}\mathrm{Tr}(\pi_{\alpha}\mfk_{H_0})-\mid \mfd^{''}\pi_{\alpha}\mid^2 \displaystyle{\frac{\og^n}{n!}}
\end{equation*}
into $\nu$,
\begin{equation*}
\begin{split}
\nu=&\mu_l \int_X\mathrm{Tr}(\mfk_{H_0})-
\sum^{l-1}_{\alpha=1}(\mu_{\alpha+1}-\mu_{\alpha})\left\{\int_X\mathrm{Tr}(\pi_{\alpha}\mfk_{H_0})
-\int_X \mid \mfd^{''}\pi_{\alpha} \mid^2_{H_0}\right\}\\
=&\int_X\mathrm{Tr}\left( (\mu_l\mathrm{Id}-\sum^{l-1}_{\alpha=1}(\mu_{\alpha+1}-\mu_{\alpha})\pi_{\alpha} \right)\mfk_{H_0}
+\sum^{l-1}_{\alpha=1}(\mu_{\alpha+1}-\mu_{\alpha})\int_X  \mid \mathcal{D}^{''}\pi_{\alpha} \mid^2\\
=& \int_X \mathrm{Tr}(u_{\infty}\mfk_{H_0})+\lag \sum^{l-1}_{\alpha=1}(\mu_{\alpha+1}-\mu_{\alpha})
(dP_{\alpha})^2(u_{\infty})(\mfd^{''}u_{\infty}), \mfd''u_{\infty}\rag_{H_0},
\end{split}
\end{equation*}
where the function $dP_{\alpha}: R\times R\lraw R$ is defined by
\begin{equation*}
dP_{\alpha}(x,y)=
\begin{cases}
\displaystyle{\frac{P_{\alpha}(x)-P_{\alpha}(y)}{x-y}},\qquad &x\neq y;\\
\qquad P_{\alpha}'(x),& x=y.
\end{cases}
\end{equation*}
From simple calculation, we have if $\mu_{\beta}\neq \mu_{\gamma}$
\begin{equation}
\sum^{l-1}_{\alpha=1}(\mu_{\alpha+1}-\mu_{\alpha})
(dP_{\alpha})^2(\mu_{\beta}, \mu_{\gamma})=\mid \mu_{\beta}-\mu_{\gamma} \mid^{-1}.
\end{equation}

Since $\tr u_{\infty}=0,$ so by (\ref{a.6}) and the same arguments in \cite[p.~793-794]{li2012existence} there holds
\begin{equation}\label{eq320}
\nu=\int_X \mathrm{Tr}(u_{\infty}\Phi(H_0,\phi))+\lag \sum^{l-1}_{\alpha=1}(\mu_{\alpha+1}-\mu_{\alpha})
(dP_{\alpha})^2(u_{\infty})(\mfd^{''}u_{\infty}), \mfd''u_{\infty}\rag_{H_0}<-C^*.
\end{equation}
Combining (\ref{eq319}) with (\ref{eq320}), we have
$$\sum^{l-1}_{\alpha=1}(\mu_{\alpha+1}-\mu_{\alpha})\mathrm{rank}{E_{\alpha}}\left( \displaystyle{\frac{\deg(E)}{\mathrm{rank}{E}}}-
\displaystyle{\frac{\deg(E_{\alpha})}{\mathrm{rank}{E_{\alpha}}}}\right)<0.$$
This indicates there must exist a term $(\mu(E)-\mu(E_{\alpha_0}))<0$, which contradicts the semi-stability of $E$.
\end{proof}

Finally, we  prove the ``if '' part of the Theorem \ref{B}.
\begin{thm}\label{tm2}
Let $(X,\og)$ be an $n$-dimensional compact Gauduchon manifold and $(E,\bp_E, \phi)$ be a Higgs bundle over $X$. If  $(E, \bp_E, \phi)$ admits an approximate Hermitian-Einstein manifold, then  $(E, \bp_E, \phi)$ is $\og$-semi-stable.
\end{thm}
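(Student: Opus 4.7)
The plan is to prove the contrapositive, adapting the standard Kobayashi/Bruzzo--Gra\~na Otero argument for the K\"ahler case (\cite{Kobayashi,bruzzo}) to the Gauduchon setting. Suppose $(E,\bp_E,\phi)$ admits an approximate Hermitian--Einstein structure but is \emph{not} $\og$-semi-stable. Then there exists a proper $\phi$-invariant coherent subsheaf $\mf\hookrightarrow E$ with $\mu_{\og}(\mf)>\mu_{\og}(E)$. Replacing $\mf$ by its saturation in $E$, we may assume $\mf$ is reflexive with torsion-free quotient, so that away from an analytic subset $S\subset X$ of codimension $\geq 2$, $\mf$ is a holomorphic $\phi$-invariant subbundle of $E$.

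Fix a smooth Hermitian metric $H$ on $E$ and let $\pi_H$ denote the $H$-orthogonal projection of $E$ onto $\mf$, defined pointwise on $X\setminus S$. By the Uhlenbeck--Yau regularity theorem, $\pi_H$ extends to an element of $L^2_1(\tend(E))$ satisfying $\pi_H^2=\pi_H=\pi_H^{\ast H}$ and $(\mathrm{Id}-\pi_H)\bp_E\pi_H=0$; the $\phi$-invariance of $\mf$ on $X\setminus S$ supplies the additional identity $(\mathrm{Id}-\pi_H)[\phi,\pi_H]=0$. Combining these gives $(\mathrm{Id}-\pi_H)\mfd''\pi_H=0$ for $\mfd''=\bp_E+\phi$. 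Since $\pt\bp\og^{n-1}=0$, the Higgs Chern--Weil formula (\cite[Prop.~2.3]{bruasse}, in the form already used in the proof of Theorem \ref{A.5}) gives
\begin{equation*}
\deg_{\og}(\mf)=\int_X\tr(\pi_H\mfk_H)\frac{\og^n}{n!}-\int_X\mid\mfd''\pi_H\mid^2_H\frac{\og^n}{n!}.
\end{equation*}

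Next, specialise to $H=H_{\vps}$ with $\max_X\mid\Phi(H_{\vps},\phi)\mid_{H_{\vps}}<\vps$. Decomposing $\mfk_{H_{\vps}}=\lmd\,\mathrm{Id}_E+\Phi(H_{\vps},\phi)$ and using $\tr\pi_{H_{\vps}}=\mathrm{rank}(\mf)$ on $X\setminus S$ together with the pointwise identity $\mid\pi_{H_{\vps}}\mid^2_{H_{\vps}}=\mathrm{rank}(\mf)$ (valid for any $H_{\vps}$-Hermitian projection of that rank), Cauchy--Schwarz yields
\begin{equation*}
\left|\int_X\tr(\pi_{H_{\vps}}\Phi(H_{\vps},\phi))\frac{\og^n}{n!}\right|\leq\sqrt{\mathrm{rank}(\mf)}\,\vps\,\mathrm{Vol}(X).
\end{equation*}
Dropping the non-negative term $\int_X\mid\mfd''\pi_{H_{\vps}}\mid^2_{H_{\vps}}$ from the Chern--Weil identity, dividing by $\mathrm{rank}(\mf)$, and using the definition of $\lmd$ to identify $\lmd\,\mathrm{Vol}(X)$ with the appropriate fixed multiple of $\mu_{\og}(E)$, one obtains $\mu_{\og}(\mf)\leq\mu_{\og}(E)+C\vps$ for a constant $C=C(\mf,X,\og)$. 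Sending $\vps\raw 0$ gives $\mu_{\og}(\mf)\leq\mu_{\og}(E)$, contradicting the choice of $\mf$.

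The only step that is not formal bookkeeping is the passage from the abstract coherent subsheaf $\mf$ to an $L^2_1$-projection $\pi_H$ satisfying the three identities $\pi_H^{\ast H}=\pi_H=\pi_H^2$, $(\mathrm{Id}-\pi_H)\bp_E\pi_H=0$ and $(\mathrm{Id}-\pi_H)[\phi,\pi_H]=0$. This rests on Uhlenbeck--Yau's regularity theorem for weakly holomorphic subbundles, extended to the Higgs setting by Simpson's observation that $\phi$-invariance on $X\setminus S$ survives the $L^2_1$-regularisation. Once this structural point is in hand, the remainder of the argument depends on $\og$ only through the identity $\pt\bp\og^{n-1}=0$, so the Gauduchon hypothesis is all that is needed.
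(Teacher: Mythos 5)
Your proposal is correct, but it takes a genuinely different route from the paper. The paper follows Kobayashi's original scheme: it first proves a vanishing theorem (Proposition \ref{prop2}) saying that a Higgs bundle of negative degree admitting an approximate Hermitian--Einstein structure has no nonzero $\phi$-invariant holomorphic sections (via a Weitzenb\"ock formula and integration against $\pt\bp\og^{n-1}=0$), and then applies it to $\mathcal{G}=\wedge^{p}E\otimes\det\mf^{-1}$, whose canonical $\vartheta$-invariant section coming from $\det\mf\hookrightarrow\wedge^{p}E$ forces $\deg\mathcal{G}\geq 0$ and hence $\mu(\mf)\leq\mu(E)$. You instead run the direct Chern--Weil argument: orthogonal projection $\pi_{H_{\vps}}$ onto the saturated destabilizing subsheaf, the Gauduchon Chern--Weil formula of Bruasse, dropping the non-negative $\int_X|\mfd''\pi_{H_{\vps}}|^2$ term, and a Cauchy--Schwarz bound on $\int_X\tr(\pi_{H_{\vps}}\Phi(H_{\vps},\phi))$ before letting $\vps\to 0$. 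Your route is more direct and avoids having to verify that exterior and tensor powers inherit approximate Hermitian--Einstein structures with the right constants (the step the paper outsources to Kobayashi); its cost is that it leans on the $L^2_1$ theory of sheaf projections and the subsheaf Chern--Weil formula, which the paper needs only for the harder ``only if'' direction. Two small inaccuracies worth fixing: the fact that $\pi_H$ lies in $L^2_1$ and satisfies the Chern--Weil identity for a given saturated subsheaf is part of Bruasse's Proposition 2.3 (finiteness of $\deg\mf$ forces $\mfd''\pi_H\in L^2$), not the Uhlenbeck--Yau regularity theorem, which goes in the opposite direction (from a weakly holomorphic $L^2_1$ projection to a coherent subsheaf); and the contrapositive framing is unnecessary, since your chain of inequalities proves $\mu_{\og}(\mf)\leq\mu_{\og}(E)$ for every $\phi$-invariant subsheaf outright. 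Neither affects the validity of the argument.
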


Firstly, following the techniques of Kobayahsi \cite{Kobayashi1} and Bruzzo-Gra\~na Otero's \cite{bruzzo}, we  prove a Higgs version vanishing theorem.

\begin{prop}\label{prop2}
Let $(X, \og)$ be an $n$-dimensional Hermitian manifold with Gauduchon metric $\og$ and $(E, \bp_E, \phi)$ be a Higgs bundle over $X$. Assume that $E$ admits an approximate Hermitian Einstein structure. If  $\deg E <0$ , then $E$ has no nonzero $\phi$-invariant sections of $E$.
\end{prop}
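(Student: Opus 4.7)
The plan is to adapt Kobayashi's vanishing theorem to the Higgs setting on a Gauduchon manifold through a Weitzenb\"ock-type computation combined with the Gauduchon integration-by-parts identity. I interpret a $\phi$-invariant section in the standard sense as defining a morphism of Higgs sheaves $(\mathcal{O}_X,0)\to(E,\phi)$, equivalently a $\bp_E$-holomorphic section $s$ with $\phi(s)=0$. I argue by contradiction: suppose such a nonzero $s$ exists.

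The first step is a pointwise identity for an arbitrary Hermitian metric $H$ on $E$. The standard Chern-connection computation, using $\bp_E s=0$ and $F_H s=\bp\pt_H s$, yields $\bp\pt|s|_H^2=\lag F_H s,s\rag_H-\lag \pt_H s,\pt_H s\rag_H$, hence
\begin{equation*}
P(|s|_H^2)=\lag K_H s,s\rag_H-|\pt_H s|_H^2,
\end{equation*}
where $K_H=\sqrt{-1}\Lambda_\og F_H$. A short calculation in a local $H$-unitary frame, writing $\phi=\sum_\alpha A_\alpha\,dz^\alpha$, identifies $\lag\sqrt{-1}\Lambda_\og[\phi,\phi^{*H}]s,s\rag_H=|\phi^{*H}s|_H^2-|\phi(s)|_H^2$. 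Substituting $K_H=\mfk_H-\sqrt{-1}\Lambda_\og[\phi,\phi^{*H}]$ and invoking $\phi(s)=0$ yields
\begin{equation*}
P(|s|_H^2)=\lag\mfk_H s,s\rag_H-|\phi^{*H}s|_H^2-|\pt_H s|_H^2\leq\lag\mfk_H s,s\rag_H.
\end{equation*}

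The second step inserts the approximate Hermitian--Einstein family. For $\vps>0$ take $H=H_\vps$ with $|\mfk_{H_\vps}-\lmd\cdot\mathrm{Id}_E|_{H_\vps}<\vps$. Since $\mfk_{H_\vps}-\lmd\mathrm{Id}_E$ is $H_\vps$-self-adjoint, one has $\lag\mfk_{H_\vps} s,s\rag_{H_\vps}\leq(\lmd+\vps)|s|_{H_\vps}^2$, so $P(|s|_{H_\vps}^2)\leq(\lmd+\vps)|s|_{H_\vps}^2$. Because $\og$ is Gauduchon ($\pt\bp\og^{n-1}=0$), Stokes's theorem gives $\int_X P(u)\,\og^n/n!=0$ for every smooth function $u$ on $X$, the same identity that underlies the well-definedness of $\deg_\og$ at the start of the paper. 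Integrating the pointwise bound produces
\begin{equation*}
0\leq(\lmd+\vps)\int_X|s|_{H_\vps}^2\,\frac{\og^n}{n!}.
\end{equation*}

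Since $\deg E<0$ forces $\lmd<0$, choosing any $\vps\in(0,-\lmd)$ makes $\lmd+\vps<0$, so $\int_X|s|_{H_\vps}^2=0$, hence $s\equiv 0$, a contradiction. The only non-routine step is the local frame computation of $\sqrt{-1}\Lambda_\og[\phi,\phi^{*H}]$ acting on $s$; the remaining ingredients are the classical Weitzenb\"ock identity and the Gauduchon integration by parts already built into the paper's framework.
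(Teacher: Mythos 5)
Your proof is correct and follows essentially the same route as the paper's: the same Weitzenb\"ock identity for a holomorphic section, the same observation that $\lag\sqrt{-1}\Lambda_{\og}[\phi,\phi^{*H}]s,s\rag_H\geq 0$ once $\phi(s)=0$, the same choice of an approximate Hermitian--Einstein metric with error smaller than $-\lmd$ (the paper fixes $\xi=-\lmd/2$), and the same double integration by parts using $\pt\bp\og^{n-1}=0$. The only immaterial difference is a sign convention: you phrase the inequality with $P=\sqrt{-1}\Lambda_{\og}\bp\pt$ while the paper writes it for $\sqrt{-1}\Lambda_{\og}\pt\bp H(s,s)$.
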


\begin{proof}
Let $H$ be a Hermitian metric over $E$ and  $s$ be a $\phi$-invariant holomorphic  section of $E$.  From simple calculation, one has
\begin{equation*}
H(s, \sqrt{-1}\Lambda_{\og}[\phi, \phi^{*H}]s)\geq 0.
\end{equation*}
Then we have the Weitzenb\"ock formula
\begin{equation}\label{eq321}
\begin{split}
\sqrt{-1}\Lambda_{\og}\partial\bp H(s, s)
=&\mid \partial_H s \mid^2+H(s, -\sqrt{-1}\Lambda_{\og}\bp\pt_H s)\\
=&\mid \partial_H s \mid^2+H(s, -\mfk_H (s))+H(s, \sqrt{-1}\Lambda_{\og}[\phi, \phi^{*H}]s)\\
\geq &H(s, -\mfk_H(s)).
\end{split}
\end{equation}

Since $(E,\bp_E,\phi)$ admits an approximate Hermitian-Einstein structure, it holds that for $\forall \xi>0$, there exists a metric $H_{\xi}$ such that
\begin{equation*}
\sup\limits_X\mid \mathcal{K}_{H_{\xi}}-\lambda \mathrm{Id} \mid<\xi,
\end{equation*}
where $\lmd=\displaystyle{\frac{2\pi\deg(E)}{\mathrm{Vol}(X)\mathrm{rank(E)}}}<0.$
Taking $\xi=\frac{-\lmd}{2},$ there exists a Hermitian metric $H_{\frac{-\lmd}{2}}$ such that
\begin{equation}\label{eq322}
\frac{3\lmd}{2}\cdot \mathrm{Id}<\mfk_{H_{\frac{-\lmd}{2}}}<\frac{\lmd}{2}\cdot \mathrm{Id}.
\end{equation}
Combining (\ref{eq321}) with (\ref{eq322}), we have
\begin{equation}\label{eq323}
-\frac{\lmd}{2}\mid s \mid^2 \leq \sqrt{-1}\Lambda_{\og}\partial\bp H_{\frac{-\lmd}{2}}(s, s).
\end{equation}
Integrating both sides of (\ref{eq323}) over $X$ and using $\pt\bp \og^{n-1}=0$, there holds
\begin{equation*}
\begin{split}
0\leq \int_X \mid s \mid_{H_{\frac{-\lmd}{2}}}^2 \displaystyle{\frac{\og^n}{n!}}\leq \int_X \sqrt{-1}\Lambda_{\og}\partial\bp H_{\frac{-\lmd}{2}}(s, s)=-\int_X <s,s>_{H_{\frac{-\lmd}{2}}} \bp\partial \displaystyle{\frac{\og^{n-1}}{(n-1)!}}=0.
\end{split}
\end{equation*}
This forces $s=0.$
\end{proof}

\textbf{Proof of Theorem \ref{tm2}}
  Let $\mathcal{F}$ be any saturated Higgs sub-sheaf with rank $p$. Construct a Higgs bundle
$$\mathcal{G}=(G, \vartheta)=(\wedge^p E\otimes \det{\mathcal{F}}^{-1},\vartheta),$$
where $\vartheta$ is the induced Higgs field. By using the technique in \cite[p.~119]{Kobayashi1}, one can check that $\mathcal{G}$ admits an approximate Hermitian-Einstein structure with the constant
\begin{equation}\label{D.3}
\lmd(\mathcal{G})=\displaystyle{\frac{2p\pi}{\mathrm{Vol}(X)}}(\mu(E)-\mu(\mathcal{F})).
\end{equation}
  The canonical morphism $ \det{\mathcal{F}}\hookrightarrow  \wedge^p E$ induced by the inclusion map $i: \mf\hookrightarrow E$ can be seen as a non-trivial $\vartheta$-invariant section of $\mathcal{G}$. Then from Proposition \ref{prop2}, we have $\lmd(\mathcal{G})=\displaystyle{\frac{2\pi\deg(G)}{\mathrm{Vol}(X)\mathrm{rank(G)}}}\geq 0$. This together with (\ref{D.3}) indicates $\mu(\mf)\leq \mu(E),$ i.e. $(E,\bp_E,\phi)$ is  semi-stable.
\qed

\par
\bibliographystyle{plain}

\end{document}